\newtheorem{theorem}{Theorem}[section]
\newtheorem{corollary}{Corollary}
\theoremstyle{definition}
\newtheorem{definition}[theorem]{Definition}
\newcommand{\D}[1]{{\mathbb#1}}
\newcommand{\RR}{{\D{R}}}
\newcommand{\PP}{{\D{P}}}
\newcommand{\Uc}{{\mathcal{U}}}
\title[The Hopf bifurcation with bounded noise]
      {The Hopf bifurcation with bounded noise}
\author[Ryan Botts,  Ale Jan Homburg and Todd Young]{}
\subjclass{Primary: 37H20; Secondary: 37G10, 34F20.}
 \keywords{Random dynamical system, random differential equation, stationary measure, minimal forward invariant set.}
 \email{ryanbotts@pointloma.edu}
 \email{a.j.homburg@uva.nl}
 \email{youngt@ohio.edu}
\thanks{T.Y.\ was partially supported by NIH-NIGMS grant R01GM090207.}
\begin{document}
\maketitle

\centerline{\scshape Ryan T.~Botts }
\medskip
{\footnotesize
 \centerline{Department of Mathematical, Information \& Computer Sciences}
   \centerline{Point Loma Nazarene University, 3900 Lomaland Drive}
   \centerline{ San Diego, CA 92106 USA}
} 

\medskip

\centerline{\scshape Ale Jan Homburg}
\medskip
{\footnotesize
 \centerline{KdV Institute for Mathematics, University of Amsterdam, }
   \centerline{Science Park 904, 1098 XH, Amsterdam, Netherlands, and,}
   \centerline{Department of Mathematics, VU University Amsterdam,}
   \centerline{De Boelelaan 1081, 1081 HV Amsterdam, Netherlands}
}

\medskip

\centerline{\scshape Todd R.~Young}
\medskip
{\footnotesize
 \centerline{Department of Mathematics, Ohio University, }
    \centerline{321 Morton Hall, OH 45701 Athens, USA}
}

\bigskip

 \centerline{To appear in Discrete and Continuous Dynamical Systems - A.}
 \centerline{(Communicated by Hinke Osinga)}

\begin{abstract}
We study Hopf-Andronov bifurcations in a class of random differential
equations (RDEs) with bounded noise. We observe that when an ordinary differential
equation that undergoes a Hopf bifurcation is subjected to bounded noise
then the bifurcation that occurs involves a discontinuous change in the Minimal Forward Invariant
set.
\end{abstract}


\medskip
{\em Dedicated to  John Guckenheimer;  for his many contributions to our field.}

\section{Introduction}


We will consider Hopf-Andronov bifurcations in a class of random differential
equations (RDEs)
\begin{equation}\label{rde}
\dot{x} = f_\lambda(x) + \varepsilon \xi_t
\end{equation}
as the parameter $\lambda \in \RR$ is varied.
Here $x$ will belong to the plane 
and $\xi_t$ will be a realization of some noise.
We are interested in bounded noise in which $\xi_t$ takes values in a closed disk
$\Delta\subset\RR^2$. More specifically we will consider Hopf-Andronov bifurcations with
radially symmetric noise where $\xi_t$ takes values in the unit disk. 
The RDEs without noise, for $\varepsilon=0$, unfold a
Hopf-Andronov bifurcation as $\lambda$ varies.
We denote by $\Uc$ the
collection of all possible realizations of the noise.
We assume that $f$ and $\Uc$ are sufficiently well-behaved that the
equations (\ref{rde}) uniquely defines a flow $\Phi^t(x,\xi)$
for all realizations $\xi$ of the noise.

For the general framework of random dynamical systems
we refer the reader to L.~Arnold's book \cite{Ar1} (see also \cite{CK1,Jo}). A distinctive feature
of dynamical systems with bounded noise is that they may admit more than one stationary measure.
For discrete time Markov processes these measures were studied by
Doob \cite{Do}, who showed that their supports are precisely the {\em minimal
forward invariant} (MFI) sets.
In \cite{HY_hard} the authors adapted Doob's proof
to the case of the continuous time (not necessarily Markov) processes generated by
RDEs on compact manifolds.
It was observed (\cite{HY_hard,ZH}) that these stationary measures can
experience dramatic changes, such as a change in the number of
stationary measures or a discontinuous change in one of the supports
of densities. We refer to such changes as {\em hard bifurcations}.
Given the one-to-one correspondence between stationary measures
and the MFI sets on which they are supported,
in order to study hard bifurcations, it is sufficient to study the
bifurcations of MFI sets themselves. 

We adopt from \cite{HY_2drandbif} the following assumptions on (\ref{rde}) and its flow:

\vskip .1cm
\noindent
{\bf H1}. The set $\Delta$ is a closed disk with smooth boundary. 
For each $x$ the map $\Delta \rightarrow T_x X$ given by $\xi \mapsto f(x,\xi)$
is a diffeomorphism with a strictly convex image $f(x, \Delta)$.
\vskip .1cm

\vskip .1cm
\noindent
{\bf H2}. There exist $r_0>0$ and  $t_1 >0$ such that
$$
\Phi^t_\lambda(x,\Uc) \supset B(\Phi^t_\lambda(x,0),r_0) \qquad \forall t \ge t_1.
$$
\vskip .1cm

In \cite{HY_2drandbif}, under these noise conditions the authors provided
a complete classification of bounded noise co-dimension one hard bifurcations in phase
space dimensions 1 and 2.
We call a set $F \subset X$  {\em forward invariant} if
\begin{equation}\label{invariantset}
    \Phi^t_\lambda(F,\Uc) \subset F
\end{equation}
for all $t \in \RR^+$. Denote by $\mathcal{F}$ the collection
of forward invariant sets. There is a partial ordering on
$\mathcal{F}$ by inclusion, i.e. $E \preceq F$ if $E \subset F$.
We call $E \subset \mathcal{F}$ a {\em minimal forward invariant} (MFI) set
if it is minimal with respect to the partial ordering $\preceq$.
In this context, MFI sets were shown to exist in \cite{HY_hard}.
It follows easily from the definitions that an MFI set for
(\ref{rde}) is open and connected and that the closures
of distinct MFI sets are disjoint.

A natural assumption is 
further that we are given a 
$\theta^t$-invariant, ergodic probability measure $\PP$ on $\Uc$ with the following hypothesis:

\vskip .1cm
\noindent
{\bf H3}. There exist $t_2 >0$ so that $\Phi^t_\lambda(x,\cdot)_* \PP$
is absolutely continuous w.r.t. a Riemannian measure $m$ on $X$ for all $t > t_2$ and all 
$x \in X$.
\vskip .1cm

Under this assumption, the closures of MFIs are supports of stationary densities \cite{HY_hard}.
We note that the concept of MFI set is the same
as the concept of {\em positively invariant maximal control sets} defined
in the context of control theory \cite{CK2}.
We now define bifurcation of MFI sets.
\begin{definition}
A {\em bifurcation} of MFI sets is said to occur in a parameterized family
of random differential equations if either: 
\begin{itemize}
   \item[\bf B1] The number of MFI sets changes.
   \item[\bf B2] A MFI set changes discontinuously with respect to the Hausdorff metric.
\end{itemize}
\end{definition}

In a supercritical Hopf bifurcation taking place in \eqref{rde} for $\varepsilon=0$,
a stable limit cycle appears in the bifurcation at $\lambda=0$.
For a fixed negative value of $\lambda$, the differential equations 
without noise possess
a stable equilibrium and the RDE with small noise has an MFI which is a disk around the equilibrium.
Likewise, at a fixed positive  value  of $\lambda$ for which \eqref{rde} without noise possesses
a stable limit cycle, small noise will give an annulus as MFI. 
A bifurcation of stationary measures takes place when varying $\lambda$. 
We will prove the following bifurcation scenario for small $\varepsilon >0$: the RDE \eqref{rde} undergoes
a hard bifurcation of type {\bf B2} in which a globally attracting MFI set changes
discontinuously, by suddenly developing a ``hole''.
This hard bifurcation takes place at a 
delayed parameter value $\lambda = \mathcal{O}(\varepsilon^{2/3})$.
We include a brief discussion of attracting random cycles.
Finally we demonstrate the results with numerical experiments on 
a radially symmetric Hopf bifurcation with bounded noise. 

For recent studies of Hopf bifurcations in stochastic differential equations (SDEs)
we refer to \cite{Bashetal,Arnetal,absh,Wie}.  
In such systems there is a unique stationary measure, with support equal to the entire state space.
Bifurcations of supports of stationary measures, as arising in RDEs with bounded noise, 
do not occur in the context of SDEs. 

\section{Random perturbations of a 
planar Hopf-Andronov bifurcation}

Consider a smooth family of planar random differential equations
\begin{equation}\label{e:f+euv}
(\dot{x},\dot{y}) = f_\lambda (x,y) + \varepsilon (u,v)
\end{equation}
where $\lambda\in \mathbb{R}$ is a parameter and $u,v$ are noise terms from  $\Delta = \{u^2+v^2 \le 1\}$, 
representing radially symmetric noise.
Hypothesis {\bf H1} is therefore fulfilled, we consider noise such that 
also {\bf H2} is satisfied.

We assume that without the noise terms, i.e. for $\varepsilon = 0$, the family of differential equations
unfolds  a supercritical Hopf bifurcation at $\lambda=0$ \cite{Kuz}.
The Hopf bifurcation in planar RDEs with small bounded noise is described in the following result.

\begin{theorem}\label{t:randomhopf}
Consider a family of RDEs \eqref{e:f+euv} depending on one parameter $\lambda$,
that unfolds, when $\varepsilon = 0$,  a supercritical Hopf bifurcation at $\lambda=0$.
For small $\varepsilon >0$ and $\lambda$ near 0, there is a unique MFI set $E_\lambda$. 
There is a single hard bifurcation at 
$\lambda_\mathrm{bif} = \mathcal{O}(\varepsilon^{2/3})$ as
$\varepsilon \downarrow 0$. At $\lambda =\lambda_\mathrm{bif}$ the MFI set $E_\lambda$ changes
from a set diffeomorphic to a disk for $\lambda <  \lambda_\mathrm{bif}$ to a set diffeomorphic
to an annulus  for $\lambda \ge  \lambda_\mathrm{bif}$. At $\lambda_\mathrm{bif}$ the inner radius
of this annulus is $r^* = O(\varepsilon^{1/3})$.
\end{theorem}

\begin{proof}
We first bring the system without the noise terms, 
\begin{align}\label{e:f}
(\dot{x},\dot{y}) &= f_\lambda (x,y),
\end{align}
 into normal form.
Note that a smooth coordinate transformation $h_\lambda$ and 
a multiplication by a smooth positive function $\alpha_\lambda$
(which is equivalent to 
a time reparameterization), change \eqref{e:f} into
\begin{align}\label{e:nf}
 (\dot{x},\dot{y}) &= \alpha_\lambda (x,y) (h_\lambda)_* f_\lambda (x,y)
=  \alpha_\lambda (x,y) Dh(h^{-1}(x,y))   f_\lambda(h^{-1}(x,y)) .
\end{align}
A smooth normal form transformation consists 
of a smooth coordinate transformation $h_\lambda$, a time reparameterization, 
and an additional reparametrization of the parameter $\lambda$.
There exists a smooth normal form transformation that changes \eqref{e:f} into
\begin{align}
 \left( \begin{array}{c} \dot{x} \\ \dot{y} \end{array}\right) &= \left( \begin{array}{cc} \lambda & -1 \\ 1 & \lambda \end{array} \right) \left( \begin{array}{c} {x} \\ {y} \end{array}\right)- r^2 \left( \begin{array}{c} {x} \\ {y} \end{array}\right) + \mathcal{O}(r^4),
\end{align}
see \cite{Kuz} (here $r = \sqrt{x^2+y^2}$).
Applying this normal form transformation
to \eqref{e:f+euv} yields
\begin{align}
 \left( \begin{array}{c} \dot{x} \\ \dot{y} \end{array}\right) &= \left( \begin{array}{cc} \lambda & -1 \\ 1 & \lambda \end{array} \right)\left( \begin{array}{c} {x} \\ {y} \end{array}\right) - r^2 \left( \begin{array}{c} {x} \\ {y} \end{array}\right)+ \mathcal{O}(r^4) + \varepsilon A(x,y,\lambda) \left(\begin{array}{c} u \\ v \end{array}\right)
\end{align}
where $A(x,y,\lambda) = A_0(\lambda) + \mathcal{O} (r)$ is a two by two matrix that depends smoothly on its arguments.
Note that expression \eqref{e:nf} implies that the noise terms $(u,v)$ are the same as in \eqref{e:f+euv}; 
they only get multiplied by a matrix $A(x,y,\lambda)$.
Changing to polar coordinates we find
\begin{align*}
 \dot{r} &= \lambda r - r^3 + \mathcal{O}(r^4) + \varepsilon \left\langle A \left(\begin{array}{c} u \\ v \end{array}\right) , \left(\begin{array}{c}\cos \theta \\ \sin \theta\end{array}\right) \right\rangle,
\\
\dot{\theta} &= 1 + \frac{\varepsilon}{r} \left\langle A \left(\begin{array}{c} u \\ v\end{array}\right) , \left(\begin{array}{c} -\sin \theta \\ \cos \theta\end{array}\right) \right\rangle.
\end{align*}
We perform a rescaling in $\varepsilon$ by putting $r = \varepsilon^{1/3} \bar{r}$ and
$\lambda = \varepsilon^{2/3} \bar{\lambda}$. This brings
\begin{align}
\label{e:thetabarr}
\begin{split}
 \dot{\bar{r}} &= \varepsilon^{2/3} \left[ \bar{\lambda} \bar{r} - \bar{r}^3 + \varepsilon^{1/3}\mathcal{O}(\bar{r}^4) +  \left\langle A \left(\begin{array}{c} u \\ v\end{array}\right)  , \left(\begin{array}{c}\cos \theta \\\sin \theta\end{array}\right) \right\rangle \right],
\\
\dot{\theta} &= \varepsilon^{2/3} \left[ \varepsilon^{-2/3} + \frac{1}{\bar{r}}  \left\langle A \left(\begin{array}{c} u \\ v\end{array}\right) , \left(\begin{array}{c}-\sin \theta \\ \cos \theta\end{array}\right) \right\rangle \right],
\end{split}
\end{align}
were
$A = A_0(\lambda) + \varepsilon^{1/3}\mathcal{O}(\bar{r})$.
Multiplying by a factor $\varepsilon^{-2/3}$ and taking 
the limit $\varepsilon \downarrow 0$ gives
\begin{align*}
 \dot{\bar{r}} &= \bar{\lambda}\bar{r} - \bar{r}^3 + \langle A (u,v) , (\cos \theta,\sin \theta) \rangle
\end{align*}
for the radial component.

Noting that $(u,v)$ lies in a unit disk, $A (u,v)$ lies in an ellipse. If we let $a$ and $b$ be the
major and minor axes of this ellipse, then by a rotation of coordinates we may transform the
last equation into
\begin{align}\label{rbarsimple}
 \dot{\bar{r}} &= \bar{\lambda}\bar{r} - \bar{r}^3 + a u \cos \theta + b v \sin \theta.
\end{align}
In \cite{HY_2drandbif} the authors showed that the boundary of an MFI set consists
of solutions of the extremal vector fields defined by the bounded noise differential equations.
Observe that for $\varepsilon = 0$, \eqref{e:thetabarr} reads $\dot{\theta} = 1$ and $\dot{\bar{r}} = 0$ 
and its right hand varies continuously with $\varepsilon$.
Hence for $\varepsilon$ small, the $\bar{r}$ components of the extremal vector fields 
are approximately the extremal values of the $\bar{r}$ component.
A simple calculation shows that the extremal values of the last two terms in (\ref{rbarsimple}) are given by:
$$
                 \pm \sqrt{a^2 \cos^2 \theta + b^2 \sin^2 \theta}.
$$
Thus the boundaries of all solutions of (\ref{rbarsimple}) are given by the solution $r_\pm$ of the equations:
 \begin{equation}\label{rpm}
 \dot{r}_\pm = \bar{\lambda} r_\pm - r_{\pm}^3  \pm \sqrt{a^2 \cos^2 \theta + b^2 \sin^2 \theta}. 
\end{equation}
Now note that $\dot{\theta}$ is of order $\varepsilon^{-2/3}$.  
This allows us to average equations \eqref{rpm} 
to obtain the averaged equations:
 \begin{equation}
 \dot{\bar{r}}_\pm = \bar{\lambda} \bar{r}_\pm - \bar{r}_{\pm}^3  
         \pm \int_0^{2 \pi} \sqrt{a^2 \cos^2 \theta + b^2 \sin^2 \theta} \, d\theta. 
\end{equation}
Note that the integral may be transformed into:
$$
     4 a \int_0^{\pi/2} \sqrt{1 - \left(1 - \frac{a^2}{b^2}\right) \sin^2 \theta } \, d\theta
         = 4 a E\left(\sqrt{1 - a^2/b^2}\right),
$$
where $E(k) \equiv \int_0^{\pi/2} \sqrt{1 - k^2 \sin^2 \theta } \, d\theta$ is the complete
elliptic integral of the second kind. Thus the averaged equations for $r_\pm$ are simply:
\begin{equation}\label{rpmavg}
 \dot{\bar{r}}_\pm = \bar{\lambda} \bar{r}_\pm - \bar{r}_{\pm}^3  
         \pm  c, 
\end{equation}
where $c = 4 a E(\sqrt{1 - a^2/b^2})$.

The analysis of the MFI  bifurcation is now straightforward. 
First consider the equation for $\bar{r}_+$. For all 
parameters, this has a hyperbolic attracting fixed  point at the largest real solution 
of $r^3 - \lambda r - c = 0$. According to Theorem~4.1.1 in \cite{GH} the original equation
has a hyperbolic attracting periodic orbit that passes close to the hyperbolic fixed point
on a Poincar\'{e} section. This periodic orbit $\gamma^+$ encloses an MFI set. 
Further it is can be shown
that any orbit beginning outside of $\gamma^+$ 
will converge to the MFI set.

Next consider the equation for $\bar{r}_-$.
A saddle-node bifurcation occurs in the averaged equation when 
\begin{equation}\label{e:l*}
      \lambda_\mathrm{bif} = \frac{3 c^{2/3}}{4^{1/3}}
\end{equation}
at 
\begin{equation}\label{e:r*}
       r^* =  \left(\frac{c}{2}\right)^{\frac{1}{3}}
\end{equation}
in which a stable and an unstable fixed point are formed. 
It follows from Theorem~4.3.1 in \cite{GH} that the Poincar\'{e} map for original equation
also undergoes a saddle-node bifurcation near the one for the averaged equation.
Thus a stable periodic orbit $\gamma^-$ for $r_-$ is created. 
The stable orbits $\gamma^+$ for $r_+$ and $\gamma^-$ for $r_-$ 
enclose an annular MFI set.

We summarize the above calculations. 
For $\lambda < \lambda_\mathrm{bif}$
the disk region inside $\gamma^+$ is a minimal forward invariant set. 
For $\lambda \ge \lambda_\mathrm{bif}$
the MFI set is the annular region bounded by $\gamma^-$ and $\gamma^+$.
Thus the MFI set changes discontinuously at
$\lambda_\mathrm{bif}$ and so a hard bifurcation of type {\bf B2}  occurs.
\end{proof}


\section{Random cycles}

The deterministic Hopf bifurcation involves the creation of a limit cycle.
In this section we comment on the occurrence of attracting random cycles.

Random cycles are defined in analogy with random fixed points \cite{Ar1};
for its definition we need to consider the noise realizations $\xi$ 
and the flow $\Phi^t_\lambda (x,\xi)$ for two sided time 
$t \in \mathbb{R}$.
We henceforth consider the skew product flow
\[ 
(x,\xi) \mapsto  \left( \Phi^t_\lambda (x,\xi_t) ,  \theta^t \xi \right)
\]
with
\[
 \theta^t \xi_s = \xi_{t+s}
\]
for $t,s \in \mathbb{R}$.
We formulate a result with a relaxed notion of attracting random cycle, as we allow for 
time reparameterizations. The notion implies though that a Poincar\'e return map on a section 
$\{ \theta = 0\}$ has an attracting random fixed point.

Recall that a random fixed point is a map $R : \mathcal{U} \to \mathbb{R}^2$
that is flow invariant,
\begin{align*}
  \Phi_\lambda^t (R(\xi),\xi) &= R(\theta^t \xi)
\end{align*}
for $\mathbb{P}$ almost all $\xi$.
A random cycle is defined as a continuous map $S : \mathcal{U} \times \mathbb{S}^1 \to \mathbb{R}^2$
that gives an embedding of a circle for $\mathbb{P}$ almost all $\xi \in \mathcal{U}$ 
and is flow invariant in the sense
\begin{align*} 
(S(\theta^t \xi , \mathbb{S}^1) , t) &\subset 
\bigcup_{t \in \mathbb{R}} (\Phi^t_\lambda (S (\xi, \mathbb{S}^1) ,\xi), t) 
\end{align*}
Different regularities of the embeddings $\mathbb{S}^1 \mapsto S (\xi, \mathbb{S}^1)$ may be 
considered. 


The following result establishes the occurrence of attracting random cycles
following the hard bifurcation, for small noise amplitudes. We confine ourselves with a statement
on continuous random cycles, thus avoiding for instance constructions of invariant cone fields to prove
Lipschitz continuity. 

\begin{theorem}\label{t:rcycle}
For values of $(\lambda,\varepsilon)$ with $\lambda > \lambda_\mathrm{bif}$ and $\varepsilon$ small, 
the MFI $E_\lambda$ is diffeomorphic to an annulus and the
 flow $\Phi^t_\lambda$ admits a random cycle
$S :\mathcal{U} \times \mathbb{S}^1 \to \mathbb{R}^2$ inside $E_\lambda$.


The random cycle is attracting in the sense that there
is a neighborhood $U_\lambda$ of the MFI $E_\lambda$,
so that for all $x\in U_\lambda$, the distance between $(\Phi^t_\lambda (x,\xi),t)$ 
and $\bigcup_{t \in \mathbb{R}} (S(\theta^t \xi , \mathbb{S}^1) , t)$
goes to zero as $t\to\infty$.
\end{theorem}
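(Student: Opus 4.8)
The plan is to reduce the statement to the existence of an attracting random fixed point for a Poincar\'e return map on the section $\{\theta=0\}$, as anticipated in the paragraph preceding the theorem. Since $\lambda>\lambda_\mathrm{bif}$ and $\varepsilon$ is small, Theorem~\ref{t:randomhopf} gives that $E_\lambda$ is an annulus bounded by the attracting periodic orbits $\gamma^-$ and $\gamma^+$, and throughout this annulus the angular velocity $\dot\theta$ stays close to $1$, hence is bounded away from zero. Consequently every orbit in $E_\lambda$ crosses the section $\Sigma=\{\theta=0\}\cap E_\lambda$ transversally and returns to it after a uniformly bounded time. After a time reparametrization making $\theta$ advance at unit rate, permitted by the relaxed notion of random cycle in the statement, the first-return map becomes a genuine cocycle over the time-$2\pi$ shift $\theta^{2\pi}$ on $\mathcal{U}$, and the problem is reduced to producing an attracting random fixed point of the radial return map.

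First I would define, for each realization $\xi$, the random return map $P_\xi\colon\Sigma\to\Sigma$ in the radial coordinate $\bar{r}$. Because the vector field depends smoothly on $(\bar{r},\theta,\xi)$ and the return time is uniformly bounded, $P_\xi$ is continuous in $\bar{r}$ and measurable in $\xi$; the random fixed point $R$ we seek is then characterized by $P_\xi(R(\xi))=R(\theta^{2\pi}\xi)$ for $\mathbb{P}$-almost all $\xi$.

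The crucial step is a uniform contraction estimate for $P_\xi$. Comparing two solutions $\bar{r}_1,\bar{r}_2$ of the radial equation \eqref{rbarsimple} driven by the same noise, their difference $\delta=\bar{r}_1-\bar{r}_2$ satisfies, to leading order, $\dot\delta=\bigl[\bar{\lambda}-(\bar{r}_1^2+\bar{r}_1\bar{r}_2+\bar{r}_2^2)\bigr]\delta+\mathcal{O}(\varepsilon^{1/3})$. In the annulus one has $\bar{r}\ge\bar{r}_-$, and since $\lambda>\lambda_\mathrm{bif}$ the inner boundary is the hyperbolic stable branch of $\dot{\bar{r}}_-$, so that $\bar{r}_->\sqrt{\bar{\lambda}/3}$ and hence the bracketed coefficient is negative and bounded away from zero throughout $E_\lambda$. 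Integrating over one return therefore yields $|P_\xi(\bar{r}_1)-P_\xi(\bar{r}_2)|\le\kappa\,|\bar{r}_1-\bar{r}_2|$ with a contraction factor $\kappa<1$ independent of $\xi$ and of the starting radii. A random contraction mapping argument, the cocycle analogue of the Banach fixed point theorem, then produces a unique random fixed point $R(\xi)\in\Sigma$ that depends continuously on $\xi$ and attracts every point of the fiber under forward iteration of $P_\xi$.

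Finally I would assemble the random cycle and verify attractivity. Define $S(\xi,s)$ to be the point reached by flowing $R(\xi)$ forward until the angle $\theta$ has increased by $s\in\mathbb{S}^1$; for $\mathbb{P}$-almost every $\xi$ this traces an embedded circle, and flow invariance of $S$ in the stated sense follows at once from flow invariance of $R$ together with the cocycle property of $\Phi^t_\lambda$. For attractivity, hypothesis {\bf H2} and the attractivity of $\gamma^-,\gamma^+$ guarantee that every point of a neighborhood $U_\lambda$ of $E_\lambda$ enters the annulus; its successive crossings of $\Sigma$ then converge to $R(\xi)$ by the fiberwise contraction, so the full orbit converges to the random cycle. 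I expect the main obstacle to be the uniform contraction estimate: one must control the $\mathcal{O}(\bar{r}^4)$ terms, the noise contribution, and the angular mismatch between the two compared orbits uniformly in $\xi$ over an annulus whose radius and width both scale like $\varepsilon^{1/3}$, while simultaneously keeping the variable return times under control through the time reparametrization.
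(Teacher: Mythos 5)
Your core mechanism --- reparameterize time so that $\theta$ advances at a constant rate, then exploit that the radial field is uniformly contracting on the annulus because $\bar{\lambda}-3\bar{r}^2<0$ for $\bar{r}$ at or above the inner radius --- is exactly the mechanism of the paper's proof, and your observation that $\bar{r}_->\sqrt{\bar{\lambda}/3}$ for $\lambda>\lambda_\mathrm{bif}$ (the stable branch lies above the critical point of the cubic) is the same fact the paper uses when it notes that the coefficient $\bar{\lambda}-3R_-^2$ is negative. However, your final assembly step has a genuine gap. A \emph{random} fixed point of the return map satisfies $P_\xi(R(\xi))=R(\theta^{T}\xi)$, not $P_\xi(R(\xi))=R(\xi)$. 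Hence when you ``define $S(\xi,s)$ to be the point reached by flowing $R(\xi)$ forward until the angle has increased by $s$,'' the resulting curve starts at $R(\xi)$ at angle $0$ and ends at $R(\theta^{T}\xi)$ at angle $2\pi$, and for a typical (non-periodic) noise realization these points differ. What you have traced is an embedded arc --- one turn of a spiral --- not an embedded circle, so your $S(\xi,\cdot)$ is not a random cycle. This is not a technicality: in the random setting the orbit of a single point over one angular period never closes up, which is precisely why the invariant circle must be defined fiber-wise (one point per angular section, all at the same noise time) rather than as an orbit segment.

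The repair is essentially to redo your construction at every section $\{\theta=\phi\}$ simultaneously for the \emph{same} realization $\xi$, i.e.\ to take the pullback limit of whole circles, and this is what the paper does: after the same time reparameterization it introduces a graph transform $\Gamma^\tau_\lambda$ acting on circles written as graphs $[0,2\pi]\to[R_-,R_+]$ over the angle (constant angular speed guarantees graphs map to graphs and closed curves to closed curves), and defines $S(\xi,\cdot)=\lim_{\tau\to\infty}\Gamma^\tau_\lambda T(\xi,\cdot)$ in the sup norm, proving convergence by the same negative-coefficient contraction computation you outline. Tellingly, the paper obtains the attracting random fixed point of the Poincar\'e map as a \emph{corollary} of the cycle (via $\Pi_\lambda=\Psi^{2\pi\varepsilon^{2/3}}_\lambda$), i.e.\ it goes in the direction opposite to yours, and the closing-up problem is the reason. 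A secondary, fixable slip: in your difference equation the error should be $\mathcal{O}(\varepsilon^{1/3})\,\delta$ rather than an additive $\mathcal{O}(\varepsilon^{1/3})$; since both solutions see the same noise and (after reparameterization) the same angle, the difference of the right-hand sides vanishes at $\delta=0$. With a genuinely additive error you could only conclude that $|\delta|$ eventually becomes $\mathcal{O}(\varepsilon^{1/3})$, which would not suffice for the contraction fixed-point argument.
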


\begin{proof}
Recall from the proof of Theorem~\ref{t:randomhopf} 
the blow-up differential equations in polar coordinates:
\begin{align}\label{e:t}
\begin{split} 
 \dot{\bar{r}} &=  \bar{\lambda} \bar{r} - \bar{r}^3 + \varepsilon^{1/3}\mathcal{O}(\bar{r}^4) +  \left\langle A \left(\begin{array}{c} u \\ v\end{array}\right)  , \left(\begin{array}{c}\cos \theta \\\sin \theta\end{array}\right) \right\rangle,
\\
\dot{\theta} &=  \varepsilon^{-2/3} + \frac{1}{\bar{r}}  \left\langle A \left(\begin{array}{c} u \\ v\end{array}\right) , \left(\begin{array}{c}-\sin \theta \\ \cos \theta\end{array}\right) \right\rangle,
\\
\dot{t} &= 1. 
\end{split}
\end{align}
where
$u,v$ are noise realizations.

Consider a reparameterization of time,
$ \tau = g(\bar{r},\theta,u,v) t$ with
\begin{align*}
g(\bar{r},\theta,u,v) &= 
1 + \varepsilon^{2/3} \frac{1}{\bar{r}}  \left\langle A \left(\begin{array}{c} u \\ v\end{array}\right) , \left(\begin{array}{c}-\sin \theta \\ \cos \theta\end{array}\right) \right\rangle.
\end{align*}
Note that $g$ is close to $1$, in particular positive, for small values of $\varepsilon$.
The reparameterization yields differential equations, for functions $\bar{r},\theta, t$ of $\tau$, 
\begin{align}\label{e:tau}
\begin{split}
{\bar{r}}' &=  \frac{1}{g(\bar{r},\theta,u,v)}  
\left( 
\bar{\lambda} \bar{r} - \bar{r}^3 + 
\varepsilon^{1/3}\mathcal{O}(\bar{r}) + 
 \left\langle A \left(\begin{array}{c} u \\ v\end{array}\right)  , 
\left(\begin{array}{c}\cos \theta \\\sin \theta\end{array}\right) \right\rangle 
\right),
\\
{\theta}' &=  \varepsilon^{-2/3},
\\
{t}' &= \frac{1}{g(\bar{r},\theta,u,v)}.
\end{split}
\end{align}

Note that the time reparameterization does not preserve the flow of individual cycles $S(\xi,\mathbb{S}^1)$.
However, for a fixed noise realization $\xi$, 
the graph $\bigcup_{t\in \mathbb{R}} (S(\theta^t \xi , \mathbb{S}^1),t)$
is an invariant manifold for both \eqref{e:t} and \eqref{e:tau}.
As the time parameterization has changed only by a factor $\frac{1}{g(\bar{r},\theta,u,v)}$
that is bounded and bounded away from zero,   
to prove the existence of an attracting random cycle it suffices to do this for \eqref{e:tau}.


 The differential equations \eqref{e:tau} define a skew product flow
 \begin{align*}
  (r,\theta,\xi) &\mapsto (\Psi^\tau_{\lambda} (r,\theta,\xi), \theta^\tau \xi).
 \end{align*}

Define a graph transform $\Gamma^\tau_{\lambda}$ on embedded circles
written as graphs $\mathcal{U} \times [0,2\pi] \mapsto [R_-,R_+]$ 
for suitable $0< R_- < R_+$.
That is, $\Gamma^\tau_{\lambda}$ is determined through the property
\[
\mathrm{graph}\; \Gamma^\tau_{\lambda} T (\xi,\cdot)   = 
\Psi^\tau_{\lambda}( \mathrm{graph}\; T (\theta^{-\tau} \xi, \cdot), \theta^{-\tau}\xi) 
\]
Pick $R_-,R_+$ such that, in the limit $\varepsilon = 0$, $\{ R_- < r < R_+\}$ is invariant.
This is possible for $\bar{\lambda} > \lambda_\mathrm{bif}$; note $R_- < r_- < r_+ < R_+$. 
Note that $\Gamma^\tau_{\lambda,\varepsilon}$ maps $C^0([0,2\pi], [R_-,R_+])$
into itself.
%
We obtain $S(\xi , \cdot)$ as the limit
\begin{align}\label{e:backward}
 S(\xi , \cdot) = \lim_{\tau\to\infty} \Gamma^\tau_{\lambda} T (\xi,\cdot)
\end{align}
computed in the supnorm on $C^0([0,2\pi], [R_-,R_+])$.


Consider the translation $s = \bar{r} - R_-$. Then $s$ satisfies the differential equation
\begin{align*}
 s' &=  (\bar{\lambda} - 3 R_-^2) s - 3 R_- s^2 - s^3 + \bar{\lambda} R_- - R_-^3 + a,
\end{align*}
where we abbreviated 
$
a = \left\langle A \left(\begin{array}{c} u \\ v\end{array}\right)  , 
\left(\begin{array}{c}\cos \theta \\\sin \theta\end{array}\right) \right\rangle.
$
We consider the differential equation for $s$ on $[0,R_+-R_-]$, 
which is invariant by the choices of $R_-, R_+$.
Note also that 
the coefficient  $\bar{\lambda} - 3 R_-^2$ is negative.
Suppose $s_1 > s_2$ are two solutions and consider the difference $u = s_1 - s_2$.
Then $u$ satisfies
\begin{align*}
 u' &= (\bar{\lambda} - 3 R_-^2 - 6 R_- s_2  - 3 s_2^2 ) u - 
 (3 R_-  +  3 s_2)   u^2  - u^3.
\end{align*}
All coefficients here are negative, implying that $u(\tau) \to 0$ as $\tau \to \infty$.
This computation
demonstrates the convergence in \eqref{e:backward}.
Likewise for small
values of $\varepsilon$.
\end{proof}

Write $\bar{r} \mapsto \Pi_\lambda (\bar{r} , \xi)$ for the first return map on $\{ \theta = 0\}$
of $\Phi_t (\bar{r} , 0 , \xi)$.
As a corollary of the above theorem we obtain an attracting random fixed point for $\Pi_\lambda$.

\begin{corollary}
For $\lambda$ and $\varepsilon$ as in Theorem~\ref{t:rcycle},
the first return map $\Pi_\lambda$ admits a random fixed point
$R (\xi)$ 
inside $E_\lambda \cap \{ \theta = 0\}$.

The random fixed point is attracting in the sense that there
is a neighborhood $U_\lambda$ of $E_\lambda$,
so that for all $x\in U_\lambda \cap \{ \theta = 0\}$, 
$| \Pi^k (x,\xi) - \Pi^k  (R(\xi))| \to 0$ as $k\to\infty$.
\end{corollary}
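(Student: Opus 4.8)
The plan is to extract the random fixed point directly from the attracting random cycle $S$ produced in Theorem~\ref{t:rcycle}, exploiting that the cycle was constructed as a graph over the angular coordinate. First I would record two preliminary observations. Because $\dot\theta = \varepsilon^{-2/3} + \mathcal{O}(1)$ is strictly positive for small $\varepsilon$, the section $\{\theta=0\}$ is transverse to the flow on the annulus $E_\lambda$, so the first return map $\Pi_\lambda(\cdot,\xi)$ is well defined and smooth. Moreover, since the time reparameterization $\tau = g\,t$ leaves orbits (hence their intersections with $\{\theta=0\}$) unchanged, the return map of $\Phi_t$ coincides with that of $\Psi^\tau_\lambda$; in the reparameterized equations \eqref{e:tau} one has $\theta' = \varepsilon^{-2/3}$, so one full revolution takes the \emph{constant} $\tau$-time $\Delta\tau = 2\pi\varepsilon^{2/3}$. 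I would therefore work with $\Psi^\tau_\lambda$ and the shift $\sigma = \theta^{\Delta\tau}$.

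Next I would define $R(\xi)$. By the construction \eqref{e:backward}, the cycle $S(\xi,\cdot)$ is the graph of a function $T_\xi \in C^0([0,2\pi],[R_-,R_+])$, and hence meets $\{\theta = 0\}$ in the single point whose radial coordinate is $T_\xi(0)$; I set $R(\xi)$ equal to this point, which lies in $E_\lambda \cap \{\theta=0\}$ since $S(\xi,\cdot) \subset E_\lambda$. The fixed-point identity $\Gamma^{\Delta\tau}_\lambda S = S$ reads $\mathrm{graph}\, S(\xi,\cdot) = \Psi^{\Delta\tau}_\lambda(\mathrm{graph}\, S(\theta^{-\Delta\tau}\xi,\cdot),\theta^{-\Delta\tau}\xi)$. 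Restricting this to the section and using that $\Psi^{\Delta\tau}_\lambda$ advances $\theta$ by exactly $2\pi$ yields $\Pi_\lambda(R(\xi),\xi) = R(\sigma\xi)$, which is precisely the random-fixed-point relation for the return cocycle over $(\Uc,\PP,\sigma)$; iterating gives $\Pi^k_\lambda(R(\xi),\xi) = R(\sigma^k\xi)$.

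Finally, attraction would follow by restricting the flow-level convergence of Theorem~\ref{t:rcycle} to the section. For $x \in U_\lambda \cap \{\theta=0\}$, the forward orbit converges to the graph of $S$, and sampling at successive crossings of $\{\theta=0\}$, that is, at the iterates $\Pi^k(x,\xi)$, produces convergence to the crossings $R(\sigma^k\xi) = \Pi^k(R(\xi))$. Quantitatively, the difference estimate in the proof of Theorem~\ref{t:rcycle} --- the equation for $u = s_1 - s_2$ with all coefficients negative --- already gives exponential contraction of radial differences along the flow, and this transfers verbatim to the $\Pi$-iterates. I expect the only genuinely delicate bookkeeping to be the passage from the continuous skew product to the discrete return cocycle: one must track the shift by the (in original time nonconstant) return time and check that the constant $\tau$-return time $\Delta\tau$ makes $\sigma$ a measurable, $\PP$-preserving transformation, so that $R$ is the desired random fixed point. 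Given the uniform transversality and contraction, this is routine.
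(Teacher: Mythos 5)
Your proposal is correct and is essentially the paper's own argument, expanded: the paper's entire proof is the one-line observation that $\Pi_\lambda = \Psi_\lambda^{2\pi\varepsilon^{2/3}}$, i.e.\ that in the reparameterized time $\tau$ the return time to $\{\theta=0\}$ is the constant $2\pi\varepsilon^{2/3}$, so the return map is a fixed-time map of the skew product flow and the random fixed point and its attraction are inherited directly from the random cycle of Theorem~\ref{t:rcycle}. Your additional bookkeeping (defining $R(\xi)$ as the section point of the graph $T_\xi$, checking the cocycle identity over the shift $\sigma=\theta^{2\pi\varepsilon^{2/3}}$, and restricting the contraction estimate to section crossings) is exactly what the paper leaves implicit.
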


\begin{proof}
Just note that $\Pi_\lambda = \Psi_\lambda^{2\pi\varepsilon^{2/3}}$, defined 
in the proof of Theorem~\ref{t:rcycle}.
\end{proof}

A better notion of random attracting cycle would be without time reparameterizations;
\begin{align*}
 \Phi^t_\lambda (S(\xi,\mathbb{S}^1),\xi) &= S(\theta^t \xi,\mathbb{S}^1). 
\end{align*}
This would allow a discussion of the dynamics on the random cycle, such as the possibility to 
find an attracting random fixed point on it, compare \cite{absh,Arnetal}.


\section{Simulations of bounded noise, radially symmetric Hopf.}

\subsection{Radially symmetric Hopf with bounded noise}

In this section we reproduce invariant measures for the radially symmetric
system
\begin{align}\label{sym_hopf_cart}
\begin{split}
  \dot{x} &= \lambda x - y - x(x^2+y^2) + \varepsilon u, \\
  \dot{y} &= x + \lambda y - y(x^2+y^2) + \varepsilon v, 
\end{split}
\end{align}
where $\lambda \in \RR$ is a parameter and $u$ and $v$ are noise terms satisfying: $u^2+v^2 \le 1$,
representing radially symmetric noise. In the simulations below we generate the noise by 
stochastic differential equations with reflective boundary conditions.

For $\varepsilon = 0$ the differential equations \eqref{sym_hopf_cart} undergo a 
supercritical Hopf-Andronov bifurcation
at $\lambda = 0$. For $\lambda <0$ the origin is a stable
global attractor. For $\lambda > 0$ the origin is unstable, but there is a
circular globally attracting periodic orbit at $r = \sqrt{\lambda}$. 
Changing to polar coordinates we find:
\begin{align}\label{sym_hopf_polar}
\begin{split}
  \dot{r} &= \lambda r - r^3 + \varepsilon \alpha, \\
  \dot{\theta} &= 1  + \frac{\varepsilon}{r} \beta, 
\end{split}
\end{align}
where
$$
  \alpha = (u \cos \theta + v \sin \theta)
             = \langle u,v \rangle \cdot \langle \cos \theta, \sin \theta \rangle
$$
and
$$
  \beta = (-u \sin \theta + v \cos \theta)
             = \langle v,-u \rangle \cdot \langle \cos \theta, \sin \theta \rangle.
$$
Figure~\ref{upperlower} indicates boundaries of radial components of the MFI set for
\eqref{sym_hopf_polar}. Following the proof of Theorem~\ref{t:randomhopf}
the boundaries of the MFI set are given by zeros of the upper and lower 
radial differential equations 
\begin{equation}
\dot{r}_\pm = \lambda r_\pm - r^3_\pm \pm \varepsilon.
\end{equation}

From (\ref{e:l*}) and (\ref{e:r*}),
$$
      \lambda_\mathrm{bif} = \frac{3 \varepsilon^{2/3}}{4^{1/3}},   \qquad
       r^* =  \left(\frac{\varepsilon}{2}\right)^{\frac{1}{3}}.
$$

Write $\rho^+$ for the positive zero of the upper differential equation for $r_+$.
For $\lambda \ge \lambda_\mathrm{bif}$,
let $\rho_-$ be the largest positive zero of the lower differential equation for $r_-$. 
For $\lambda<\lambda_\mathrm{bif}$ the MFI set is a disk of radius $\rho_+$.
For $\lambda\ge \lambda_\mathrm{bif}$ the MFI set is an annulus bounded by circles with radii
$\rho_-,\rho_+$. See Figure~\ref{upperlower}.

\begin{figure}[th]
\centerline{
\hbox{{\psfig{figure=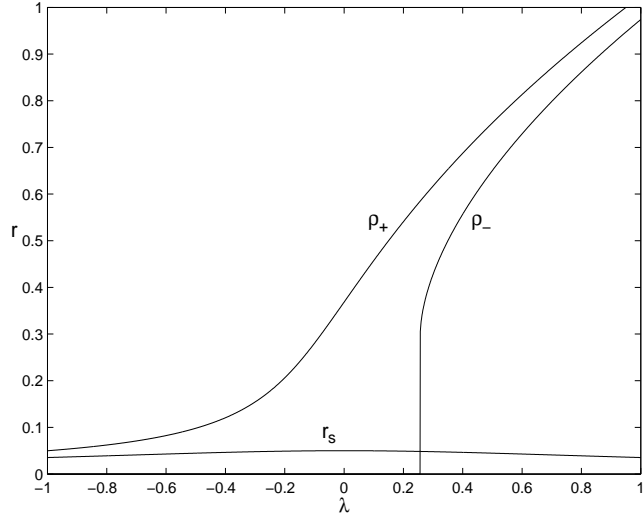,height=3in,width=4in}}}}
\caption{Boundaries of the radial components of the minimal invariant set
in the random symmetric Hopf bifurcation \eqref{sym_hopf_cart}. 
Here $\varepsilon = 0.05$.
The hard bifurcation takes place at  
$\lambda_\mathrm{bif} \approx 0.2565$ and the inner boundary has initial radius $r^* \approx 0.2924$.
The MFI set changes from a disk with radius $\rho_+$
for $\lambda<\lambda_\mathrm{bif}$ to an annulus with radii $\rho_-<\rho_+$ for 
$\lambda \ge \lambda_\mathrm{bif}$.
The graph labeled $r_s$ indicates
the boundary of the set of fixed points which are stable for $\lambda \le 0$ and unstable
for $\lambda >0$. }
\label{upperlower}
\end{figure}

In addition to the MFI set, another important dynamical feature of these equations is the set of
equilibrium points. For $\lambda \le 0$ these equilibrium points are stable, but for $\lambda >0$ they
are unstable. These are the points that are fixed points of (\ref{sym_hopf_cart}) when $(u,v)$ is a fixed value.
One can find that this set is a disk centered at the origin with radius $r_s$ which is a solution of:
$$
(1 - 2 \lambda) r^6 + (1 + \lambda^2) r^2 - \varepsilon^2 = 0.
$$
The radius $r_s$ as function of $\lambda$ is plotted in Figure~\ref{upperlower}.

\subsection{ Simulations}

In this section we simulate the bounded noise bifurcation for the sake of demonstration.
We note that associated with MFI sets are invariant densities whose supports are the MFI sets
\cite{HY_hard}. In the symmetric Hopf bifurcation as in the previous subsection, we
approximate these invariant densities for different values of $\lambda$ as $\lambda$ moves
through the random bifurcation.

The generation of bounded noise is
somewhat arbitrary without specific knowledge about the noise involved in a
particular setting. In this work we generate the noise terms $u$ and $v$ via
the stochastic system:
\begin{align}\label{sde1}
\begin{split}
  du &= \sigma \, dW_1, \\
  dv &= \sigma \, dW_2, 
\end{split}
\end{align}
where $dW_1$ and $dW_2$ are independent (of each other) normalized white noise processes  and (\ref{sde1}) are
interpreted in the usual way as It\={o} integral equations. 
 In order to assure boundedness and radial symmetry, we restrict $(u,v)$ to
the unit disk and impose reflective boundary conditions. Other methods of
generating and bounding the noise did not produce significantly difference
in the results.

The parameter $\sigma$ can be interpreted as
the rapidity of the noise. If $\sigma$ is small, then $u$ and $v$ change slowly
and as $\sigma$ increases, they change more quickly.
It turns out in our simulations  that the value of $\sigma$
has a strong influence on the characteristics of the invariant density
of (\ref{sym_hopf_cart}).

\begin{figure}[t!]

\centerline{
\hbox{{\psfig{figure=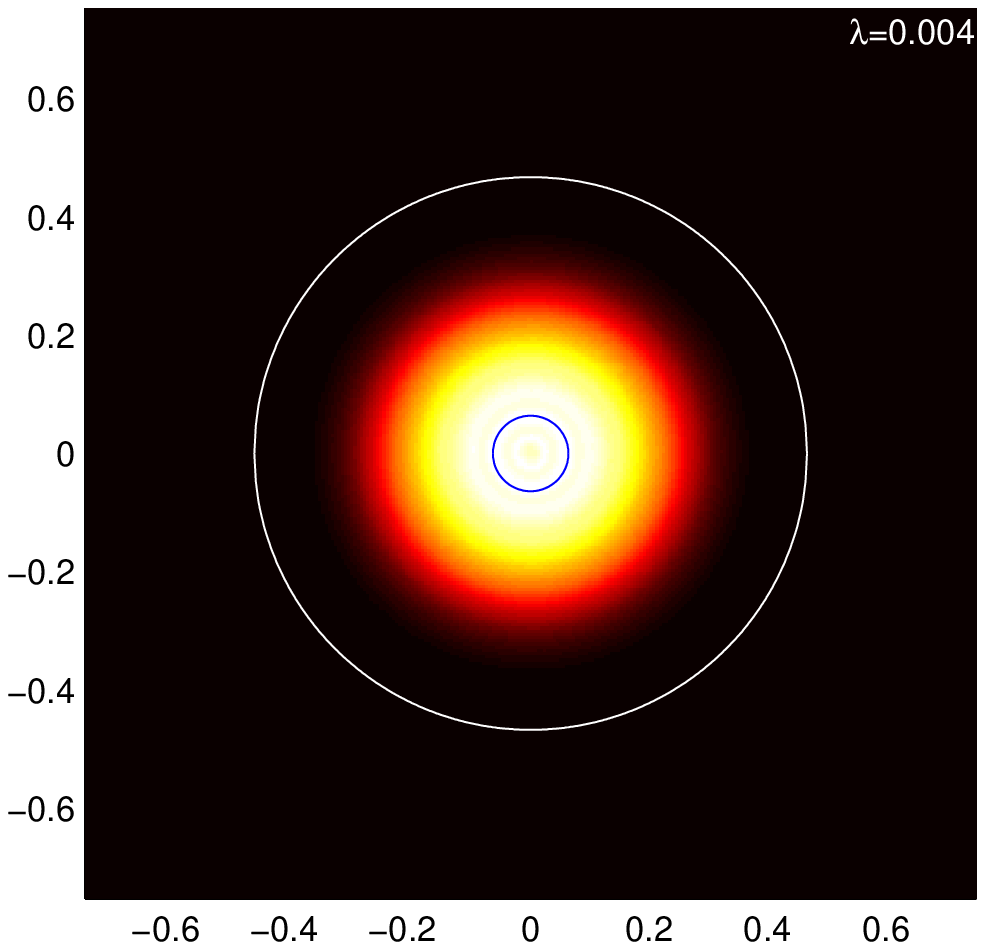,height=1.5in,width=2in}}}
\hspace*{-.3in}\hbox{{\psfig{figure=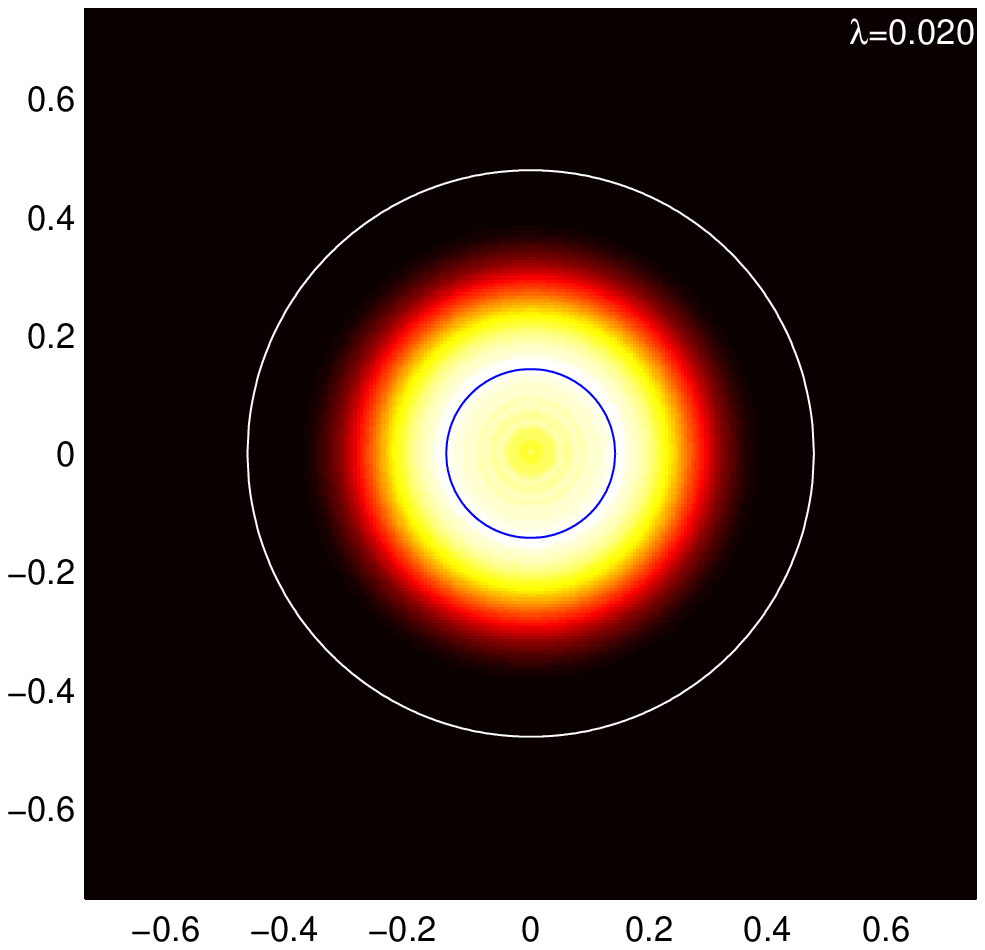,height=1.5in,width=2in}}}
\hspace*{-.3in}\hbox{{\psfig{figure=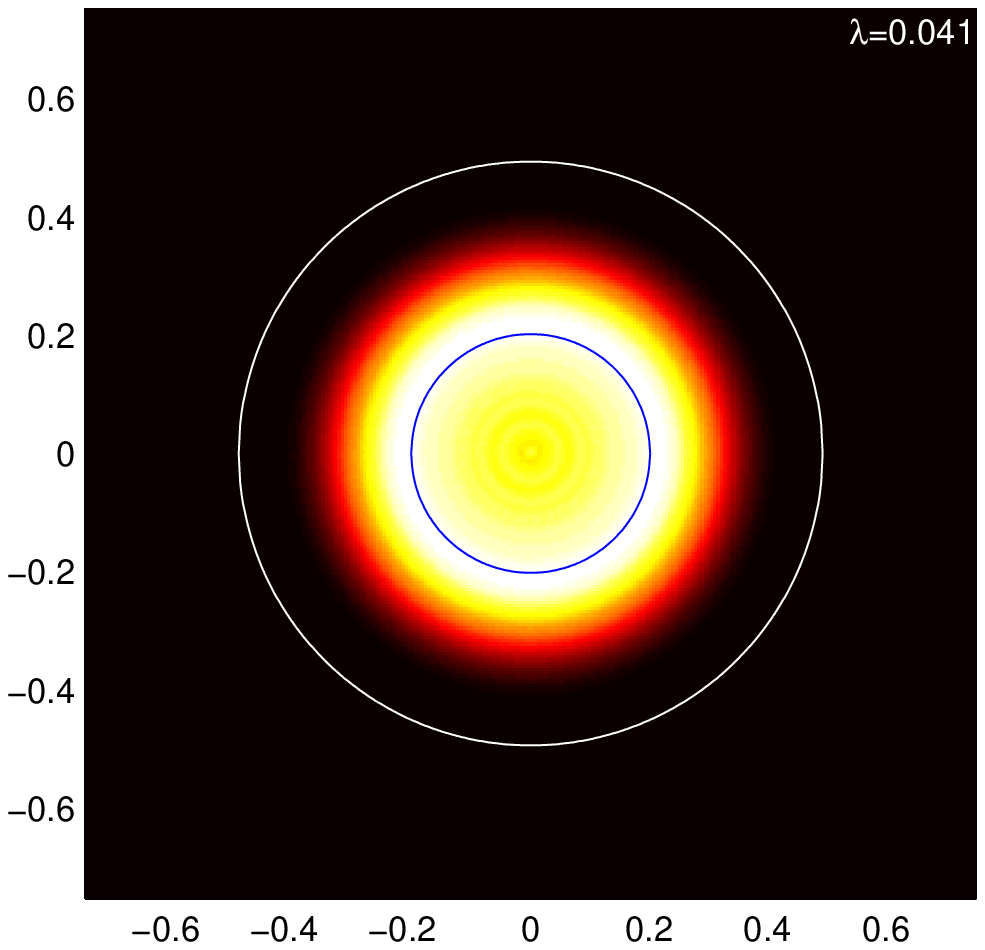,height=1.5in,width=2in}}}
}
\centerline{
\hbox{{\psfig{figure=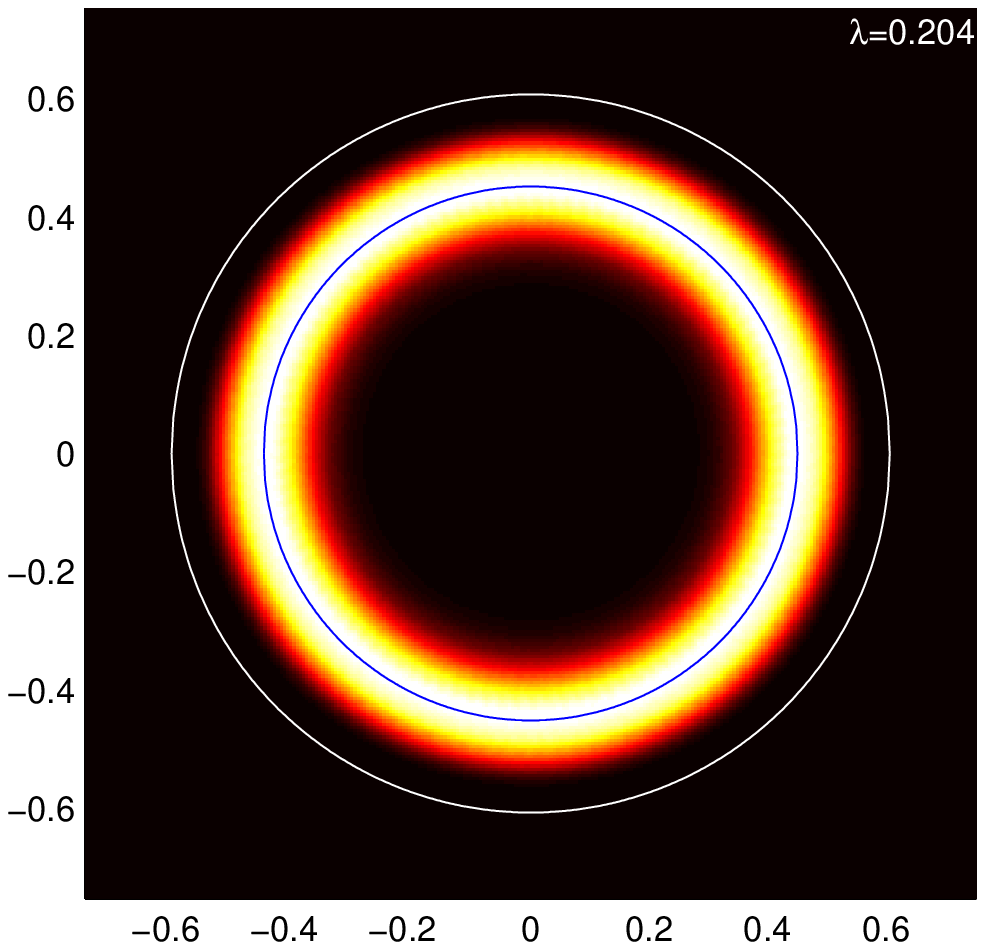,height=1.5in,width=2in}}}
\hspace*{-.3in}\hbox{{\psfig{figure=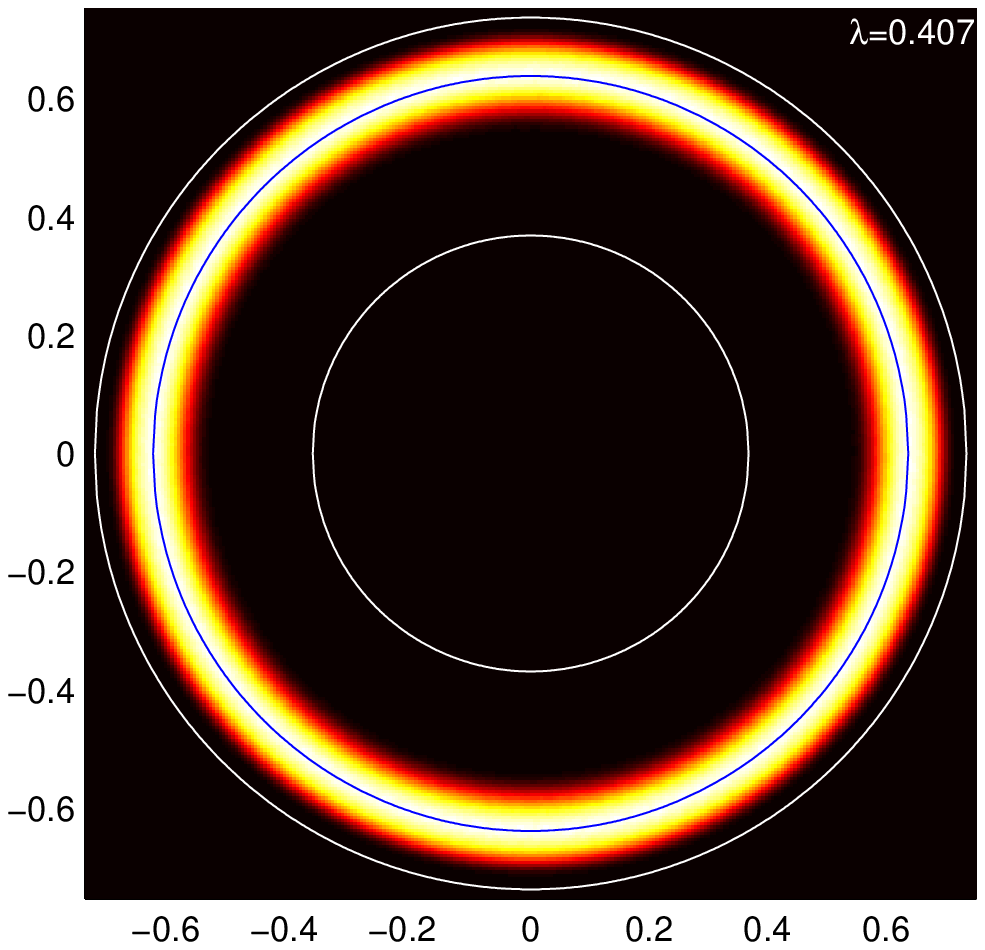,height=1.5in,width=2in}}}
\hspace*{-.3in}\hbox{{\psfig{figure=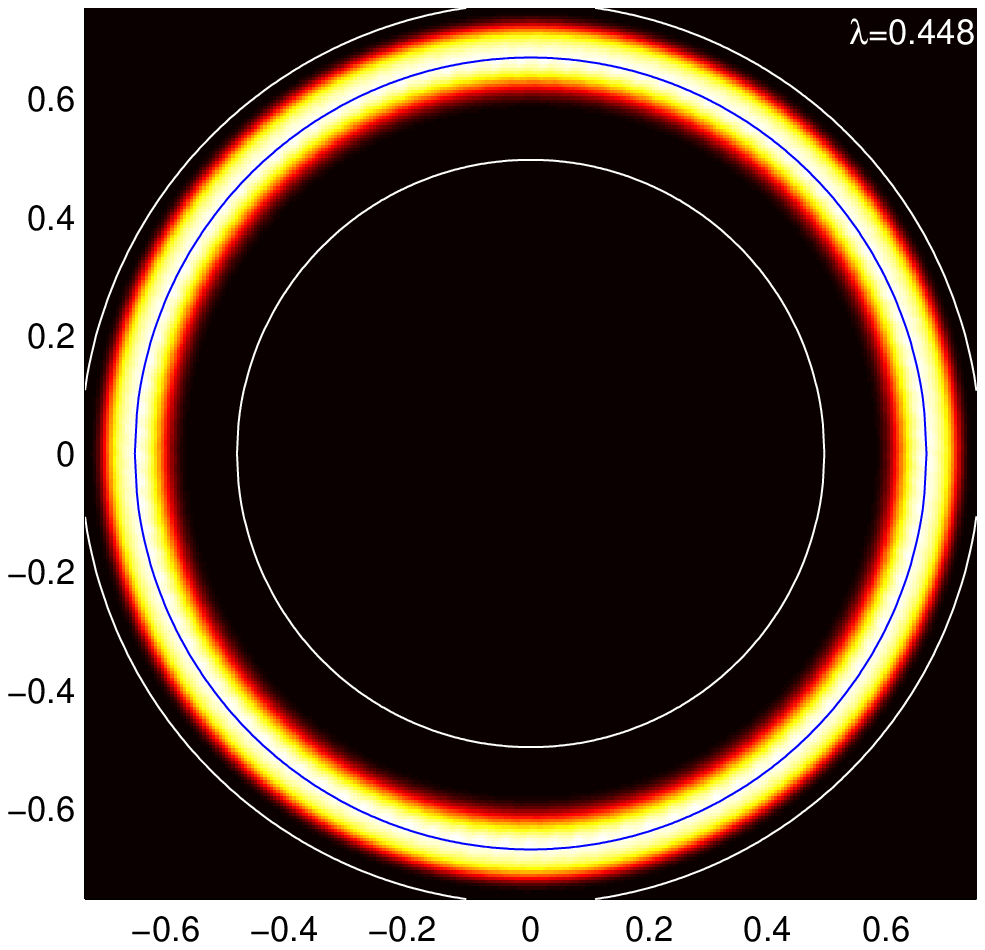,height=1.5in,width=2in}}}
}
\caption{Invariant densities for fast noise: $\sigma =1$ and $\varepsilon = 0.1$. 
}
\label{fig:fast}
\medskip

\centerline{
\hbox{{\psfig{figure=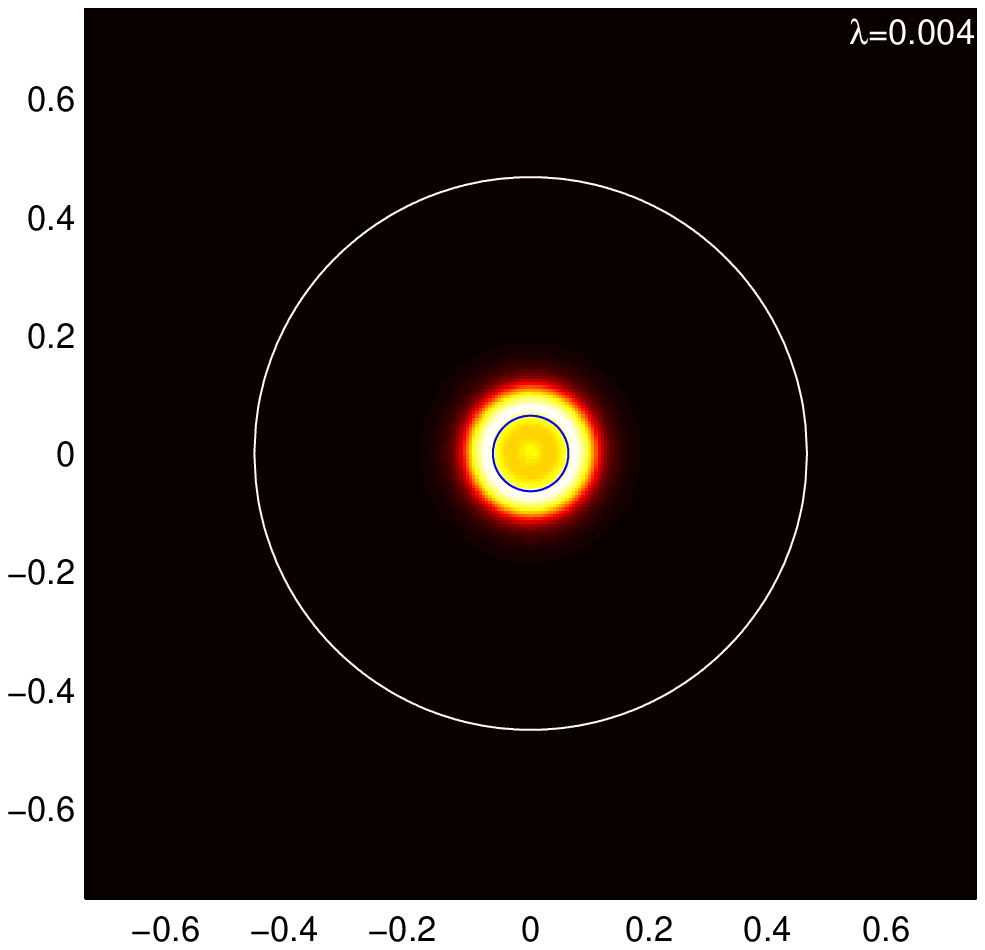,height=1.5in,width=2in}}}
\hspace*{-.3in}\hbox{{\psfig{figure=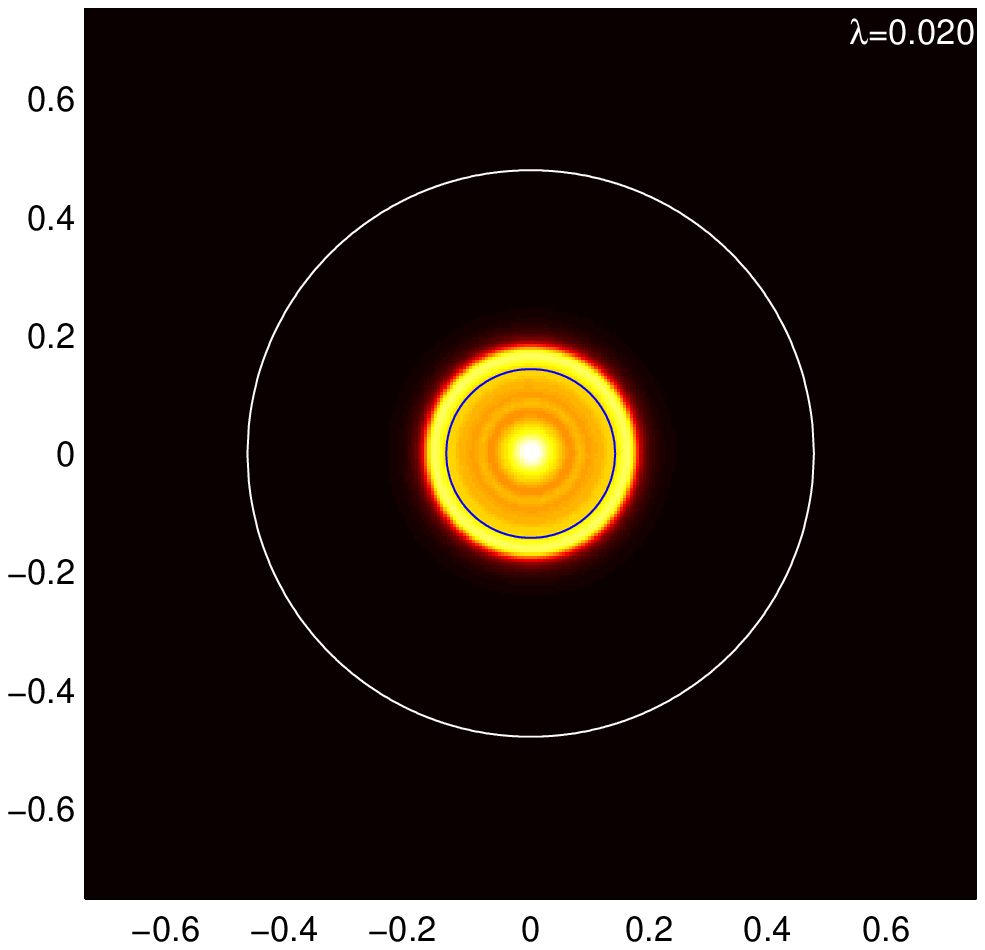,height=1.5in,width=2in}}}
\hspace*{-.3in}\hbox{{\psfig{figure=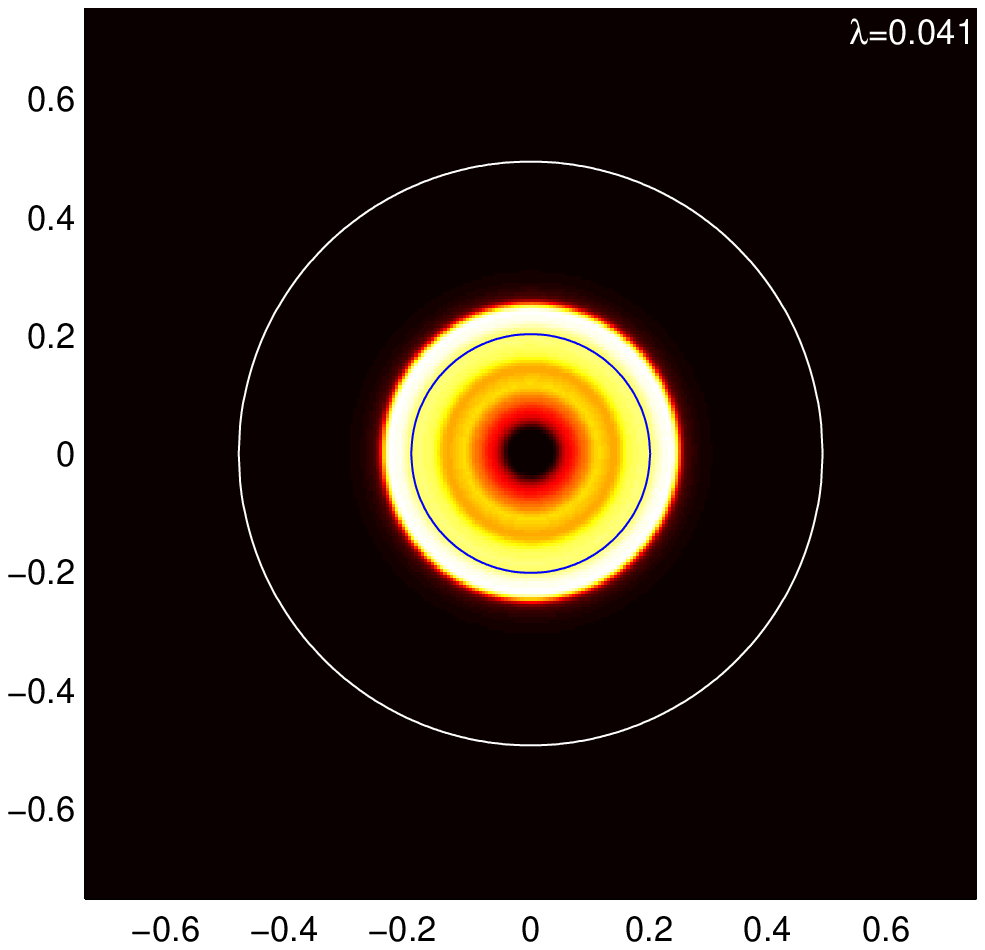,height=1.5in,width=2in}}}
}
\centerline{
\hbox{{\psfig{figure=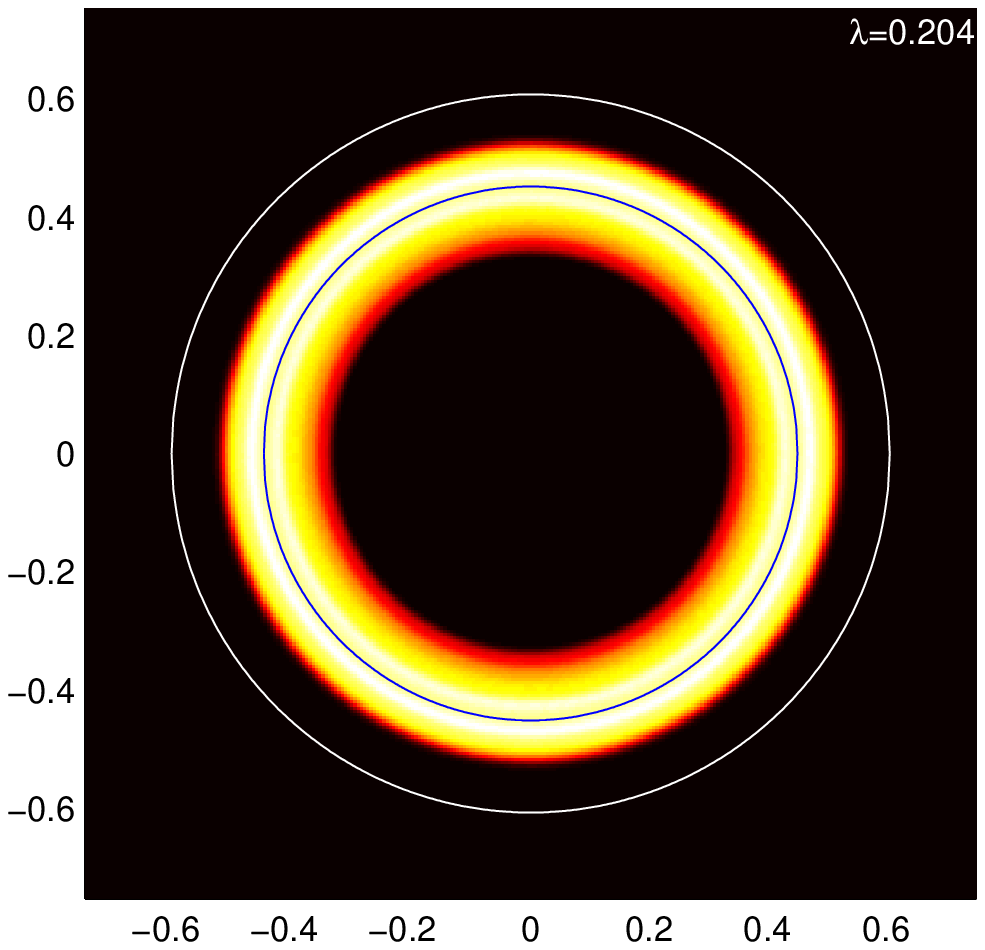,height=1.5in,width=2in}}}
\hspace*{-.3in}\hbox{{\psfig{figure=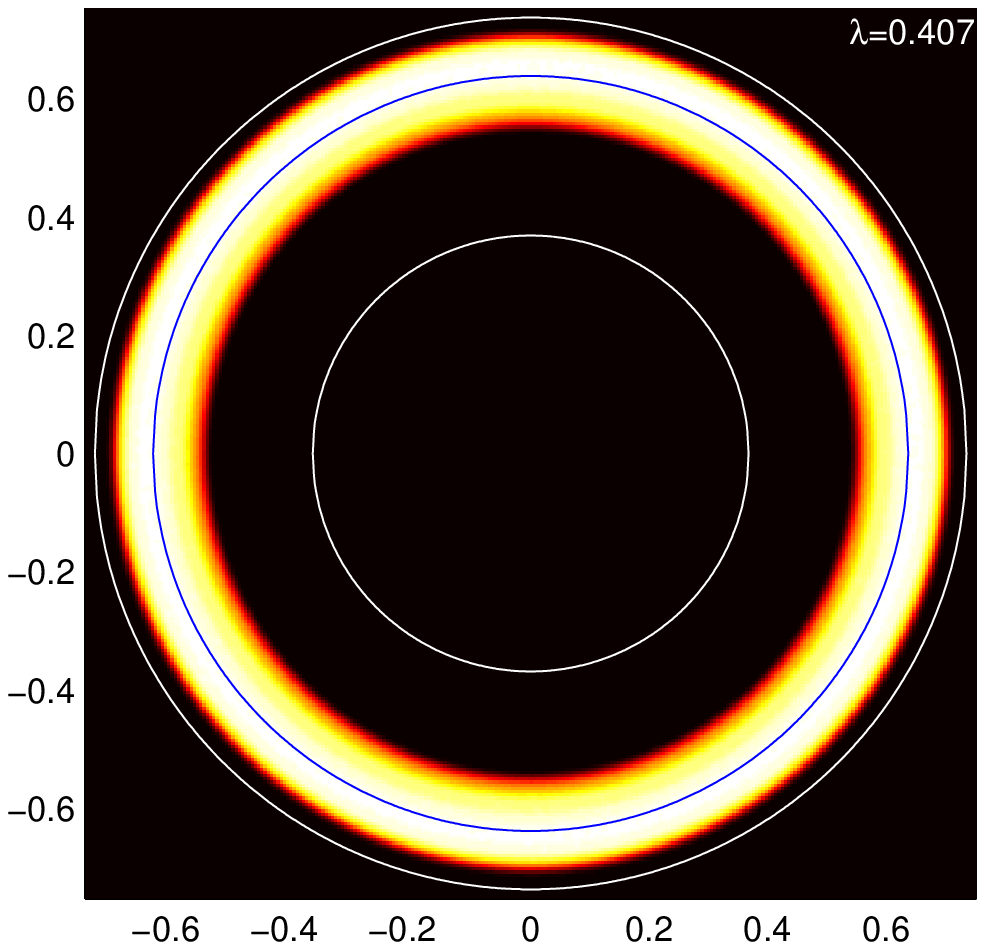,height=1.5in,width=2in}}}
\hspace*{-.3in}\hbox{{\psfig{figure=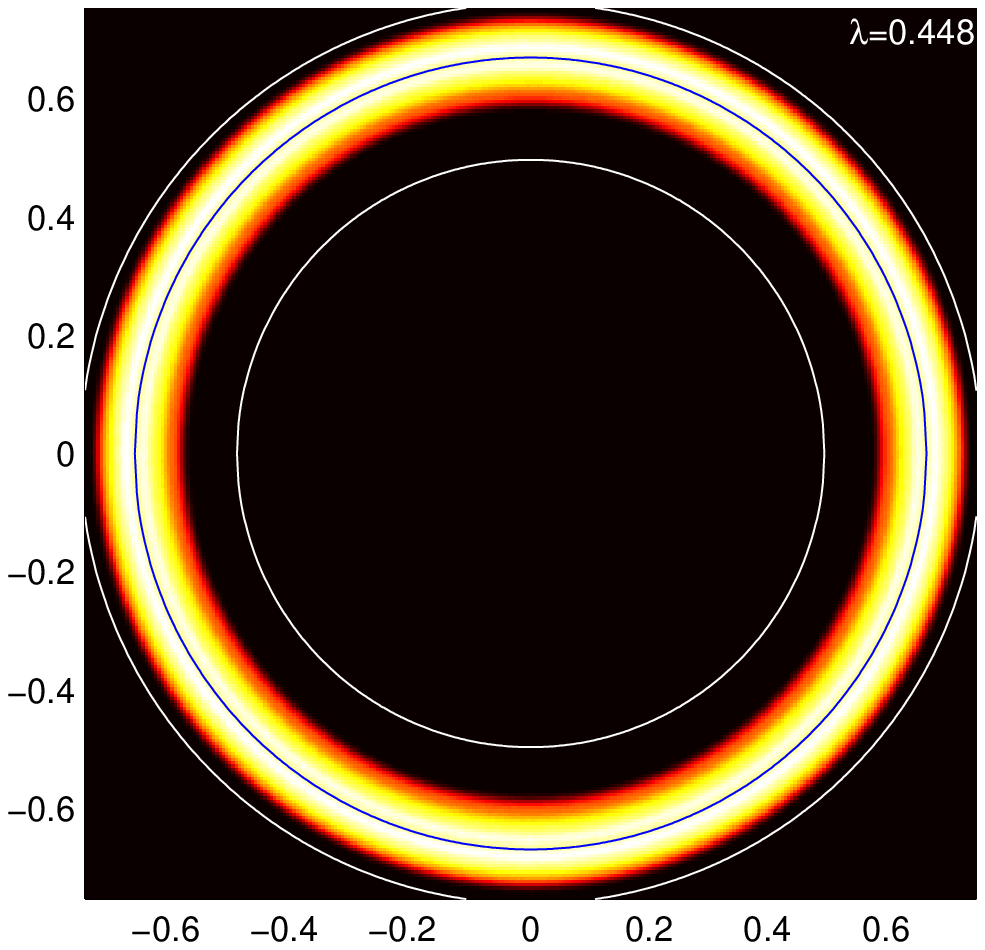,height=1.5in,width=2in}}}
}
\caption{Invariant densities for slow noise: $\sigma =.00001$ and $\varepsilon = 0.1$. 
}
\label{fig:slow}
\end{figure}

In the first set of simulations Figure~\ref{fig:fast}, $\sigma = 1.0$ as an example of fast noise and
in Figure~\ref{fig:slow}, $\sigma = 0.00001$ to show slow noise. Note that for slow noise 
and $\lambda >0$ there is a separation of time scales. Specifically, for each $\lambda$ and each 
set of values of $(u,v)$ the deterministic system defined by holding $(u,v)$ fixed has an
exponentially attracting periodic orbit. We expect then that the invariant density is concentrated
on the set which is the union of all of these deterministic limit cycles. For fast noise, this will 
not be the case and as is observed in Figure~\ref{fig:fast}, the approximated invariant densities
tend to be smoother  than for slow noise.

In both sets of simulations we use $\varepsilon = .1$. This leads to the following values of the 
bifurcation parameter and the radius of the inner boundary of the MFI set at the bifurcation:
$$
      \lambda_\mathrm{bif} = \frac{3 \varepsilon^{2/3}}{4^{1/3}} \approx 0.407163, \qquad
       r^* =  \left(\frac{\varepsilon}{2}\right)^{\frac{1}{3}} \approx 0.368403.
$$

We began the simulations by selecting random initial values for $(u,v)$ and $(x,y)$.
As we did not need an accurate solution to generate the noise we approximated
solutions to the SDE in (\ref{sde1})  using a first order
Taylor method, namely Euler's method.  We then added this noise into the
system in (\ref{sym_hopf_cart}), which we solved using a second order Adam's-Bashforth method.

After selecting the initial noise term and a starting point for the RDE,
we ran the system for 1000 iterations to allow the solution to the RDE to
move into the MFI sets.  We then ran the system for $5\times10^5$ iterations
and recorded the values of every fifth $(x,y)$ coordinate.  We repeated this
process for $10^2$ starting points and recorded a total of $10^7$ points, which
we used to generate a 2-dimensional histogram. Bright regions indicate that large numbers
of samples were observed in this region indicating a higher value of the invariant density. 
Darker regions indicate lower density.

The deterministic bifurcation takes place at $\lambda =0$ and the random bifurcation 
occurs at $\lambda_\mathrm{bif} \approx 0.4072$. In each set of figures, the densities are 
plotted for multiples of $\lambda_\mathrm{bif}$; 
namely for $\lambda = .01 \, \lambda_\mathrm{bif},  .05 \, \lambda_\mathrm{bif}, .1 \, \lambda_\mathrm{bif}, 
.5 \, \lambda_\mathrm{bif},  \lambda_\mathrm{bif}  $, and, $1.1 \, \lambda_\mathrm{bif}$. The outer circle
is the outer boundary of the MFI set, the circle inside the MFI is the stable periodic orbit 
of the deterministic system
and the inner circle that appears at $\lambda_\mathrm{bif}$ is the inner boundary of the MFI set.
It has initial radius $r^* = 0.3684$.

The invariant densities are positive for points inside the MFI set, but can be expected to
go to zero very rapidly as the boundary is approached (compare \cite{ZH}).
Also note that the densities become
undetectably small in the center long before the random bifurcation.

A color movie of the simulations can be found at:\\
http://www.youtube.com/watch?v=4tVtWGdVMi8.


\begin{thebibliography}{99}

\bibitem{Ar1} (MR1723992)
\newblock L.~Arnold, 
\newblock ``Random Dynamical Systems,"
\newblock Springer Monographs in Mathematics. Springer-Verlag, Berlin, 1998.

\bibitem{absh} (MR1678459) 
\newblock L.~Arnold, G.~Bleckert, K.~R.~Schenk-Hopp\'e,
\newblock \emph{The stochastic Brusselator: parametric noise destroys Hopf bifurcation},
\newblock in ``Stochastic dynamics" (Bremen, 1997), 71--92, Springer, New York, 1999. 

\bibitem{Arnetal} (MR1430980)
\newblock L.~Arnold, N. Sri Namachchivaya, K.~R.~Schenk-Hopp\'e,
\newblock \emph{Towards an understanding of stochastic Hopf bifurcation: a case study},
\newblock Internat. J. Bifur. Chaos Appl. Sci. Engrg.\ {\bf 6} (1996), 1947--1975.


\bibitem{Bashetal} (MR2504554)
\newblock I.~Bashkirtseva, L.~Ryashko, H. Schurz,
\newblock \emph{Analysis of noise-induced transitions for Hopf system with additive
and multiplicative random disturbances},
\newblock  Chaos Solitons Fractals {\bf 39} (2009), 72--82. 


 
\bibitem{CK1} (MR1678475)
\newblock F.~Colonius, W.~Kliemann,
\newblock \emph{Topological, smooth and control techniques for perturbed systems},
\newblock in ``Stochastic Dynamics," eds.~H.~Crauel and M.~Gundlach, Springer-Verlag, New York, 1999.

\bibitem{CK2} (MR2504554)
\newblock F.~Colonius, W.~Kliemann,
\newblock ``The dynamics of control," with an appendix by Lars Gr\"{u}ne,
\newblock Systems \& Control: Foundations \& Applications, Birkhauser Boston, Inc., Boston, MA, 2000.


\bibitem{Do} (MR0058896)
\newblock J.L.~Doob,
\newblock ``Stochastic Processes," 
\newblock Wiley, New York, 1953.



\bibitem{GH} (MR0709768, MR1139515) 
\newblock J.~Guckenheimer and P.~Holmes,
\newblock ``Nonlinear Oscillations, Dynamical Systems, and Bifurcations of Vector Fields,"
\newblock Springer-Verlag, New York, AMS {\bf 42}, 1983, revised 1990.


\bibitem{HY_hard} (MR2245080)
\newblock A.J.~Homburg and T.~Young,
\newblock \emph{Hard bifurcations in dynamical systems with bounded random perturbations},
\newblock Regular \& Chaotic Dynamics {\bf 11} (2006), 247--258.

\bibitem{HY_2drandbif} (MR2677432)
\newblock A.J.~Homburg and T.~Young,
\newblock \emph{Bifurcations for  Random Differential Equations with Bounded Noise on Surfaces},
\newblock Topol. Methods Nonlinear Anal.\ {\bf} 35 (2010), 77--98


\bibitem{Jo} (MR1678471)
\newblock R.A.~Johnson,
\newblock \emph{Some questions in random dynamical systems involving real noise processes},
\newblock in ``Stochastic Dynamics " (Bremen, 1997), eds.~H.~Crauel and M.~Gundlach, Springer-Verlag, New York, 1999.



\bibitem{Kuz} (MR2071006) 
\newblock Yu. A. Kuznetsov,
\newblock ``Elements of applied bifurcation theory,"
\newblock Springer Verlag, 1995.



\bibitem{Wie} (MR2497189)
\newblock S.~Wieczorek
\newblock \emph{Stochastic bifurcation in noise-driven lasers and Hopf oscillators}. 
\newblock  Phys. Rev. E {\bf 79} (2009), 036209, 10 pp.




\bibitem{ZH} (MR2358982)
\newblock  H.~Zmarrou and A.J.~Homburg,
\newblock \emph{Bifurcations of stationary measures of random diffeomorphisms},
\newblock Ergod.~Th.~Dyn.~Systems {\bf 27} (2007), 1651--1692.

\end{thebibliography}
\end{document}